\newtheorem{theorem}{Theorem}
\theoremstyle{plain}
\newtheorem{definition}{Definition}
\newtheorem{example}{Example}
\newtheorem{proposition}{Proposition}
\newtheorem{remark}{Remark}
\numberwithin{equation}{section}
\begin{document}
\title[Rectifying-type Curves and Rotation Minimizing Frame $\mathbb{R}^{n}$]{Rectifying-type Curves and Rotation Minimizing Frame $\mathbb{R}^{n}$}
\author{\"{O}zg\"{u}r Keskin*}
\address{\"{O}zg\"{u}r Keskin*: Ankara University, Faculty of Science,
	Department of Mathematics, 06100, Tando\u{g}an, Ankara, Turkey.}
\email{ozgur.keskin@ankara.edu.tr}
\author{Yusuf Yayl{\i}}
\address{Yusuf Yayl{\i}:Ankara University, Faculty of Science,
	Department of Mathematics, 06100, Tando\u{g}an, Ankara, Turkey.}
\email{yayli@science.ankara.edu.tr}

\subjclass[2010]{53A04; 53A05; 57R25}

\keywords{Myller configuration; Rectifying-type curve; Rotation minimizing frame (RMF) \\ 
 \indent *Corresponding author. mail:ozgur.keskin.mat@gmail.com}

\begin{abstract}
\indent In this paper, we have first given easily the characterization of special curves with the help of the Rotation minimizing frame (RMF). Also, rectifying-type curves are generalized n-dimensional space $\mathbb{R}^{n}$.

\end{abstract}

\maketitle

\section{Introduction}

\indent Rotation minimizing frames (RMFs) are presented by Bishop as an alternative to the Frenet moving frame along a curve $\gamma$ in $\mathbb{R}^{n}$. The Frenet frame is an orthonormal frame which can be defined for curves in $\mathbb{R}^{n}$. As a result, both Frenet frame and RMF are orthonormal frames. A RMF along a curve $\beta$=$\beta(s)$ in $\mathbb{R}^{n}$ is defined by the tangent vector and normal vectors $N_{i}$ ($i=1,...,(n-1)$). $N'_{i}(s)$ is proportional to $\beta'(s)$. Such a normal vector field along a curve is called to be a Rotation minimizing vector field. Any orthonormal basis $\{\beta'(s_{0}), N_{1}(s_{0}), ..., N_{n-1}(s_{0}) \}$ at a point $\beta(s_{0})$ expresses a unique RMF along the curve $\gamma$. Hence, such a RMF is uniquely designated modula of a rotation in $\mathbb{R}^{n-1}$ \cite{etayo1, etayo2, jut, wang}.\\
\par Recently, RMF is largely used in computer graphics, including sweep or blending surface modeling, motion design and control in computer animation and robotics, etc. This issue has begun to attract attention among researchers. Let's briefly express some of them. A new ordinary and influential method for certain and steady computation of RMF of a curve in $3D$ is expressed in \cite{wang}. This method called the double reflection method uses two reflections to compute each frame from its preceding one to yield a sequence of frames to approach an exact RMF. Rotation minimizing frames of space curves are used for sweep surface modeling \cite{ etayo1, etayo2, jut}. Moreover, Legender curves on the unit tangent bundle are obtained by using the Rotation minimizing (RM) vector fields. Then, the ruled surfaces corresponding  to these Legender curves are given and the singularities of these ruled surfaces are investigated \cite{bekar}.\\
\par A versor field (i.e., unit vector field) $(C,\bar{\xi})$ or a plane field $(C,\pi)$ is examined. A pair $\{(C,\bar{\xi}),(C,\pi)\}$, $\bar{\xi} \in \pi$ is called a Myller configuration in $\mathbb{E}^{3}$ and is denoted by $\mathbb{M}(C,\bar{\xi},\pi )$. If, moreover, the planes $\pi$ are tangent to $C$, then we have a tangent Myller configuration $\mathbb{M}_{t}(C,\bar{\xi},\pi )$ \cite{miron}. The geometry of the vector field $(C,\bar{\xi})$ on a surface $S$ is the geometry of associated Myller configurations $\mathbb{M}_{t}(C,\bar{\xi},\pi )$. The geometric theory of $\mathbb{M}_{t}(C,\bar{\xi},\pi )$ represents a particular case of the general Myller configuration $\mathbb{M}(C,\bar{\xi},\pi )$. In the case when $\mathbb{M}_{t}(C,\bar{\xi},\pi )$ is associated Myller configuration to a curve $C$ on a surface $S$ one obtains the classical theory of curves on surfaces. This Myller configuration plays an important role in defining of rectifying-type curves. Rectifying-type curves are studied with the help of the Myller configuration given for three dimensional space in \cite{Mac,miron}.
\par In this paper, we give an implementation of RMF using Myller configuration and generalize here rectifying-type curves n-dimensional space $\mathbb{R}^{n}$. Also, we have seen that special curves are characterized very easily with the help of this frame and the hypothesis \lq \lq the derivative of the rectifying type curves  is of the rectifying-type curves\rq \rq\  makes an important contribution to the classification of special curves.

\section{Preliminaries}

\par Let $\alpha:I \subset \mathbb{R} \rightarrow \mathbb{R}^{n}$ be arbitrary curve in $\mathbb{R}^{n}$. $\alpha$ is said a unit speed (or parameterized by arclength function) curve if $\langle{\alpha'(s),\alpha'(s)}\rangle=1$. Also, the standard inner product of $\mathbb{R}^{n}$ is given by 
\begin{eqnarray} \nonumber
\langle{X,Y}\rangle=\sum_{i=1}^{n}{x_{i}y_{i}}
\end{eqnarray}
for each \  $X=(x_{1},x_{2},...,x_{n}), \ Y=(y_{1},y_{2},...,y_{n}) \ \in \  \mathbb{R}^{n}$. The norm of a vector $u \ \in \ \mathbb{R}^{n}$ is given by $\Arrowvert u \lVert ^{2}=\langle{u,u}\rangle$. 
\begin{definition}
	A normal vector field $\vec{V}=\vec{V}(t)$ over a curve $\gamma=\gamma(t)$ in $\mathbb{R}^{n}$ is said to be relatively parallel or RM if the derivative \ $\vec{V}'(t)$ is proportional to \ $\gamma'(t)$ \cite{etayo1,etayo2}.
\end{definition}
\begin{definition}
	Let $\gamma=\gamma(t)$ in $\mathbb{R}^{n}$ be a curve. A RMF, parallel frame, natural frame, Bishop frame or adapted frame is a moving orthonormal frame $\{\vec{T}(t), \vec{N_{i}}(t)\}$, $i=1,2,...,n-1$ along $\gamma$, where $\vec{T}(t)$ is the tangent vector to $\gamma$ at the point $\gamma(t)$ and $\vec{N_{i}}(t)=\{\vec{N_{1}}(t), \vec{N_{2}}(t),...,\vec{N_{n-1}}(t)\}$ are RM vector fields \cite{etayo1,etayo2}. 
\end{definition}
\par The formulae of the Rotation minimizing frame $\{{\xi}_{1},{\xi}_{2},{\xi}_{3},{\xi}_{4}\}$ on $\int {\xi}_{1} ds$ is given as follows \cite{keskin,keskin1}:
\begin{eqnarray} \label{rmf1}
\left[ 
\begin{array}{c}
{\xi}_{1}^{\prime }(s) \\ 
{\xi}_{2}^{\prime }(s) \\ 
{\xi}_{3}^{\prime }(s)\\
{\xi}_{4}^{\prime }(s)
\end{array}
\right] =\left[ 
\begin{array}{cccc}
0 & \overline{k}_{1}& \overline{k}_{2}& \overline{k}_{3}\\
-\overline{k}_{1} & 0 & 0 & 0 \\ 
-\overline{k}_{2} & 0 & 0 & 0\\
-\overline{k}_{3} & 0 & 0 & 0
\end{array}
\right] \left[ 
\begin{array}{c}
{\xi}_{1}(s) \\ 
{\xi}_{2}(s) \\ 
{\xi}_{3}(s)\\
{\xi}_{4}(s)
\end{array}
\right],  
\end{eqnarray}
where $\overline{k}_{1}$,\  $\overline{k}_{2}$ \ and \ $\overline{k}_{3}$ \ are Rotation minimizing curvatures of \ $\int {\xi}_{1} ds$ \ curve.
Also, the formulae of the generalization Rotation minimizing frame $\{{\xi}_{1},{\xi}_{2},...,{\xi}_{n}\}$ on $\int {\xi}_{1} ds$ is given as follows. Here, $\overline{k}_{1}$, $\overline{k}_{2}$,...,$\overline{k}_{n-1}$ are Rotation minimizing curvatures of $\int {\xi}_{1} ds$ curve \cite{keskin,keskin1}.
\begin{eqnarray}\label{rmf2}
\left[ 
\begin{array}{c}
{\xi}_{1}^{\prime }(s) \\ 
{\xi}_{2}^{\prime }(s) \\ 
{\xi}_{3}^{\prime }(s)\\
...\\
{\xi}_{n-1}^{\prime }(s)\\
{\xi}_{n}^{\prime }(s)
\end{array}
\right] =\left[ 
\begin{array}{ccccc}
0 & \overline{k}_{1}&...& \overline{k}_{n-2}&\overline{k}_{n-1}\\
-\overline{k}_{1} &0 &...& 0 & 0 \\ 
-\overline{k}_{2} &0 & ...& 0 & 0\\
... & ... & ... & ...&....\\
-\overline{k}_{n-2} &0 &...& 0 & 0\\
-\overline{k}_{n-1} & 0 & ... & 0 & 0
\end{array}
\right] \left[ 
\begin{array}{c}
{\xi}_{1}(s) \\ 
{\xi}_{2}(s) \\ 
{\xi}_{3}(s)\\
...\\
{\xi}_{n-1}(s)\\
{\xi}_{n}(s)
\end{array}
\right].  
\end{eqnarray}
\section{Rectifying-Type Curves and Rotation Minimizing Frame}
Rectifying curves are introduced by Chen in \cite{chen1} as space curves whose position vector always lies in its rectifying plane, spanned by the tangent and the binormal vector fields $\vec{T}$ and $\vec{B}$ of the curve. Therefore, the position vector $\vec{\alpha}$ of a rectifying curve satisfies the equation

\begin{eqnarray}\nonumber
\vec{\alpha}(s)=\lambda(s)\vec{T}(s)+\mu(s)\vec{B}(s),
\end{eqnarray}
for some differentiable functions \ $\lambda(s)=s+b$ and \ $\mu(s)=c \ \in \ R$ in arclength functions $s$. Moreover, Chen \cite{chen1} proved that a curve in $\mathbb{R}^{3}$ with $\kappa \textgreater 0$ is congruent to a rectifying curve if and only if the ratio $ \left( \dfrac{\tau}{\kappa}\right)  $ of the curve is a nonconstant linear function in arclength function $s$. The Euclidean rectifying curves are studied in \cite{chen1,chen2}.  \\
\begin{definition}
	$\bar{r}$ is a rectifying-type curve (or simply rectifying curve) in the Frenet-type frame $\mathcal{R_{F}}$ if
	\begin{eqnarray}
	\bar{r}=\lambda(s) \bar{\xi_{1}}(s)+\mu(s) \bar{\xi_{3}}(s),
	\end{eqnarray} 
	where $\lambda$, $\mu$ are functions \cite{Mac}. 
\end{definition}
\begin{theorem}
	Let $\bar{r}(s):I \rightarrow \boldsymbol{E}_{3}$ be a curve in $\boldsymbol{E}_{3}$ expressed in the Frenet-type frame $\mathcal{R_{F}}$ by:
	\begin{eqnarray}\nonumber
	\frac{d\bar{r}}{ds}(s)=a_{1}(s)\bar{\xi_{1}}(s)+a_{2}(s)\bar{\xi_{2}}(s)+a_{3}(s)\bar{\xi_{3}}(s),\end{eqnarray} with $K_{1}(s)	> 0$, such that one of the following items holds:
	\begin{itemize}
		\item[(i)] \begin{eqnarray}\nonumber
		\frac{d}{ds}(\langle{\bar{r}(s),\bar{\xi_{1}}(s)}\rangle)=a_{1}(s),
		\end{eqnarray}
		\item[(ii)] For $K_{2}(s)\neq 0$, 
		\begin{eqnarray*}
		\frac{d}{ds}(\langle{\bar{r}(s),\bar{\xi_{3}}(s)}\rangle)=a_{3}(s).
		\end{eqnarray*}
	\end{itemize}
	Then $\bar{r}(s)$ is a rectifying-type curve. Conversely, if $\bar{r}(s)$ is a rectifying-type curve, then i) and ii) hold \cite{Mac}.
\end{theorem}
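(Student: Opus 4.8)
The plan is to compute everything in the Frenet-type frame $\mathcal{R_F}=\{\bar{\xi_1},\bar{\xi_2},\bar{\xi_3}\}$ and to observe that being a rectifying-type curve is exactly the vanishing of the $\bar{\xi_2}$-component of the position vector. First I would expand $\bar{r}(s)=r_1(s)\bar{\xi_1}(s)+r_2(s)\bar{\xi_2}(s)+r_3(s)\bar{\xi_3}(s)$, where $r_i(s)=\langle\bar{r}(s),\bar{\xi_i}(s)\rangle$, so that hypotheses (i) and (ii) become the scalar relations $r_1'=a_1$ and $r_3'=a_3$, and the conclusion ``$\bar{r}$ is rectifying-type'' becomes $r_2\equiv 0$.

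Next I would differentiate the expansion and insert the structure equations of $\mathcal{R_F}$, i.e. $\bar{\xi_1}'=K_1\bar{\xi_2}$, $\bar{\xi_2}'=-K_1\bar{\xi_1}+K_2\bar{\xi_3}$, $\bar{\xi_3}'=-K_2\bar{\xi_2}$. Grouping the result against the basis $\{\bar{\xi_1},\bar{\xi_2},\bar{\xi_3}\}$ and matching with $\frac{d\bar{r}}{ds}=a_1\bar{\xi_1}+a_2\bar{\xi_2}+a_3\bar{\xi_3}$ produces the three identities $a_1=r_1'-K_1 r_2$, $a_2=K_1 r_1+r_2'-K_2 r_3$ and $a_3=K_2 r_2+r_3'$. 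These are the only computations needed.

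From here both directions are immediate. If (i) holds, then $r_1'=a_1=r_1'-K_1 r_2$, so $K_1 r_2=0$; since $K_1>0$ throughout, $r_2\equiv 0$ and $\bar{r}=r_1\bar{\xi_1}+r_3\bar{\xi_3}$ is rectifying-type. If instead (ii) holds, then $r_3'=a_3=K_2 r_2+r_3'$, so $K_2 r_2=0$, and since $K_2\neq 0$ we again get $r_2\equiv 0$ (using continuity of $r_2$ if $K_2$ vanishes only at isolated points). Conversely, if $\bar{r}$ is rectifying-type then $r_2\equiv 0$ by definition, and the first and third identities degenerate to $a_1=r_1'$ and $a_3=r_3'$, which are precisely (i) and (ii).

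I do not anticipate a real obstacle: the heart of the argument is a single coefficient comparison after differentiating in the moving frame. The only delicate point is keeping track of the nondegeneracy assumptions --- $K_1>0$ is what lets us cancel the factor $K_1$ in the proof of (i), and the extra hypothesis $K_2\neq 0$ in (ii) is exactly what is needed to cancel $K_2$ there; dropping it would make (ii) hold trivially (e.g. for planar pieces with $K_2=0$) and the equivalence with being rectifying-type would break down. If the structure equations of $\mathcal{R_F}$ used in \cite{Mac,miron} carry different normalizing signs, the identities above change only cosmetically and the conclusion is unaffected.
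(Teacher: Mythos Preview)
Your argument is correct and is exactly the natural one: expand $\bar r$ in the frame, differentiate using the Frenet-type structure equations $\bar\xi_1'=K_1\bar\xi_2$, $\bar\xi_2'=-K_1\bar\xi_1+K_2\bar\xi_3$, $\bar\xi_3'=-K_2\bar\xi_2$, compare coefficients, and cancel the nonvanishing curvature. Note, however, that in this paper the theorem is only \emph{quoted} from \cite{Mac} and no proof is given here, so there is no in-paper proof to compare against; your computation is presumably the one carried out in \cite{Mac}, and in any case it is complete as written.
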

\par Now, we will give a new characterization of rectifying-type curves \ using \ a Rotation minimizing frame in $\mathbb{R}^{3}$, $\mathbb{R}^{4}$, $\mathbb{R}^{n}$, respectively.
\section{Rectifying-Type Curves and Rotation Minimizing Frame in $\mathbb{R}^{3}$}
\par For $n=3$, the formulae of the Rotation minimizing frame $\{{\xi}_{1},{\xi}_{2},{\xi}_{3}\}$ on $\int {\xi}_{1} ds$ is given as follows:
\begin{eqnarray}
\left[ 
\begin{array}{c}
{\xi}_{1}^{\prime }(s) \\ 
{\xi}_{2}^{\prime }(s) \\ 
{\xi}_{3}^{\prime }(s)
\end{array}
\right] =\left[ 
\begin{array}{cccc}
0 & \overline{k}_{1}& \overline{k}_{2}\\
-\overline{k}_{1} & 0 & 0 \\ 
-\overline{k}_{2} & 0 & 0 
\end{array}
\right] \left[ 
\begin{array}{c}
{\xi}_{1}(s) \\ 
{\xi}_{2}(s) \\ 
{\xi}_{3}(s)
\end{array}
\right],  
\end{eqnarray}
where $\overline{k}_{1}$ and \ $\overline{k}_{2}$ \ are Rotation minimizing curvatures of \ $\int {\xi}_{1} ds$ \ curve \cite{keskin,keskin1}.\\
\begin{definition}
	Let \ $\beta(s)=\lambda(s) \xi_{2}(s)+\mu(s) \xi_{3}(s)$ \ be a rectifying-type curve. Then,
	\begin{itemize}
		\item[(i)]If \ $\mu(s)=constant=c$ is taken, $\beta(s)=\lambda(s) \xi_{2}(s)+c \xi_{3}(s)$ is defined as a type 1 rectifying-type curve.\\
		\item[(ii)] If  $\lambda(s)=constant=c$ is taken, $\beta(s)=c \xi_{2}(s)+\mu(s) \xi_{3}(s)$ is defined as a type 2 rectifying-type curve.\\
	\end{itemize}
\end{definition}
\begin{theorem}\label{thm}
	Let $\{\xi_{1},\xi_{2},\xi_{3}\}$ be a Rotation minimizing frame (i.e. Frenet-type frame) on $\alpha=\int \xi_{1}ds$ and 
	\begin{eqnarray}\nonumber
	\beta=\lambda(s)\xi_{2}+c\xi_{3}, \quad c \in \mathbb{R},
	\end{eqnarray}  be a type 1 rectifying-type curve. If $\beta'$ is a rectifying-type curve, the following items holds:
	\begin{itemize}
		\item[(i)] $ \lambda=-c \dfrac{\overline{k}_{2}}{\overline{k}_{1}} $, $ c \in \mathbb{R} $ and so, $ \beta $ is a rectifying curve. \\
		\item[(ii)] If \begin{eqnarray}\nonumber
		\lambda=-(c \frac{\overline{k}_{2}}{\overline{k}_{1}})=s+b, \quad c, b \in \mathbb{R}, \end{eqnarray} $\beta(s)=\int \xi_{2} ds $ is a rectifying curve.\\
		\item[(iii)] If \begin{eqnarray}\nonumber
		\lambda=-(c \frac{\overline{k}_{2}}{\overline{k}_{1}})=constant=c_{2}, \quad c_{2} \in \mathbb{R}, \end{eqnarray} $\beta(s)=U=c_{2}\xi_{2}+c\xi_{3} $ is a constant vector. Moreover, $\int \xi_{2} ds$ and $\int \xi_{3}ds$ curves are helix curves with $U$ axis.\\
		\item[(iv)] If \begin{eqnarray}\nonumber
		- \frac{\overline{k}_{2}}{\overline{k}_{1}}=\tan s, \end{eqnarray} \begin{eqnarray}\nonumber \beta(s)=c\sec s Y(s) \end{eqnarray} and $Y(s)\in S^{2}$. Hence, a type 1 rectifying-type curve $\beta$ is obtained from a spherical curve (i.e. from $Y(s)$ spherical curve). This situation is consistent with conditions in Chen's papers.\\
	\end{itemize}
	\end{theorem}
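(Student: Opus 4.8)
The argument hinges on a single differentiation together with the Frenet apparatus one reads off from it. The plan is to differentiate the position vector $\beta=\lambda(s)\xi_2+c\xi_3$ using the Rotation minimizing equations of this section, impose the hypothesis that $\beta'$ be again a rectifying-type curve --- that is, that $\beta'$ have vanishing $\xi_1$-component --- and then obtain (i), and by specialising $\lambda$ the remaining items (ii)--(iv).

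First, from $\xi_2'=-\overline{k}_1\xi_1$ and $\xi_3'=-\overline{k}_2\xi_1$ one computes
\begin{eqnarray}\nonumber
\beta'=\lambda'\xi_2+\lambda\xi_2'+c\xi_3'=-(\lambda\overline{k}_1+c\overline{k}_2)\,\xi_1+\lambda'\,\xi_2 .
\end{eqnarray}
Since a rectifying-type curve lies in the plane $\operatorname{span}\{\xi_2,\xi_3\}$, requiring $\beta'$ to be of this type forces $\lambda\overline{k}_1+c\overline{k}_2=0$, hence $\lambda=-c\,\overline{k}_2/\overline{k}_1$ (here $\overline{k}_1\neq0$); this is part (i). With this $\lambda$ we get $\beta'=\lambda'\xi_2$, so (where $\lambda'\neq0$) the unit tangent of $\beta$ is $\pm\xi_2$; then $\beta''=\lambda''\xi_2-\lambda'\overline{k}_1\xi_1$ shows the principal normal of $\beta$ is $\pm\xi_1$, and with the orientation $\xi_1\times\xi_2=\xi_3$ the binormal is $\mp\xi_3$. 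Therefore $\beta=\lambda T_\beta\pm c\,B_\beta$ lies in the classical rectifying plane of $\beta$, so $\beta$ is a rectifying curve, which finishes (i).

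For (ii) I would set $\lambda=s+b$: then $\lambda'=1$, so $\beta'=\xi_2$, $\beta=\int\xi_2\,ds$, and by the previous paragraph $\beta=(s+b)T_\beta\pm c\,B_\beta$, Chen's normal form of a rectifying curve. For (iii) set $\lambda\equiv c_2$: then $\lambda'=0$, so $\beta'=0$ and $\beta\equiv U:=c_2\xi_2+c\xi_3$ is a constant vector; since $\langle\xi_2,U\rangle=c_2$ and $\langle\xi_3,U\rangle=c$ are constant, the tangent fields $\xi_2$ and $\xi_3$ keep a fixed angle with $U$, so $\int\xi_2\,ds$ and $\int\xi_3\,ds$ are generalized helices with axis $U$. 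For (iv) set $-\overline{k}_2/\overline{k}_1=\tan s$, so $\lambda=c\tan s$ and $\beta=c(\tan s\,\xi_2+\xi_3)$; then $\|\beta\|^2=c^2\sec^2 s$, and putting $Y(s):=\sin s\,\xi_2+\cos s\,\xi_3$ one checks $\|Y(s)\|=1$, hence $Y(s)\in S^2$ and $\beta(s)=c\sec s\,Y(s)$, which is exactly the cone-over-a-spherical-curve description of rectifying curves in Chen's papers.

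No step is genuinely deep; the only point requiring care is the identification of the classical Frenet frame of $\beta=\int\xi_2\,ds$ used in (i) and (ii) --- one must handle the possibly non-unit parametrisation and fix the orientation convention $\xi_1\times\xi_2=\xi_3$ of the Rotation minimizing frame --- and one must keep the standing assumption $\overline{k}_1\neq0$ in force throughout so that $\overline{k}_2/\overline{k}_1$ and all the ratios above are defined.
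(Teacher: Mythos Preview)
Your proof is correct and follows essentially the same route as the paper: differentiate $\beta$ using the RMF equations, kill the $\xi_1$-component to obtain $\lambda=-c\,\overline{k}_2/\overline{k}_1$, identify $T_\beta=\xi_2$ and $N_\beta=\xi_1$ so that $\langle\beta,N_\beta\rangle=0$, and then specialise $\lambda$ for (ii)--(iv). You add more detail than the paper on orientation and the non-unit parametrisation of $\beta$, but the argument is the same.
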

		\begin{proof}
		\begin{itemize}
			\item[(i)] Since \begin{eqnarray}\nonumber
			\beta'=\lambda'\xi_{2}+(-\lambda \overline{k}_{1}-c \overline{k}_{2})\xi_{1},
			\end{eqnarray} $\beta'$ is a rectifying-type curve if and only if \begin{eqnarray}\nonumber
			-\lambda \overline{k}_{1}-c \overline{k}_{2}=0 \end{eqnarray} and \begin{eqnarray}\nonumber
			\lambda=-(c \frac{\overline{k}_{2}}{\overline{k}_{1}}). \end{eqnarray}\\
			Thus, for
			 \begin{eqnarray}\nonumber
			\beta=-(c \frac{\overline{k}_{2}}{\overline{k}_{1}})\xi_{2}+c\xi_{3},\quad c \in \mathbb{R}, \end{eqnarray} curve,  $T_{\beta}=\xi_{2}$ and $N_{\beta}=\xi_{1}$. Resultly, $\langle{\beta,\xi_{1}}\rangle=0$ or $\langle{\beta,N_{\beta}}\rangle=0$ is obtained. $\beta$ is a rectifying curve. 
			
			\item[(ii)] For \begin{eqnarray}\nonumber
			-(c \frac{\overline{k}_{2}}{\overline{k}_{1}})=s+b, \quad c, b \in \mathbb{R}, \end{eqnarray} 
			\begin{eqnarray}\nonumber
			\beta=(s+b)\xi_{2}+c\xi_{3},\quad b,c \in \mathbb{R}, \end{eqnarray} rectifying curve overlap with $\int \xi_{2} ds $ curve.\\
			\item[(iii)] For \begin{eqnarray}\nonumber
			-(c \frac{\overline{k}_{2}}{\overline{k}_{1}})=constant=c_{2}, \quad c_{2} \in \mathbb{R}, \end{eqnarray} $U=c_{2}\xi_{2}+c\xi_{3} $ is obtained. Since \begin{eqnarray}\nonumber
			-c_{2} \overline{k}_{1}-c \overline{k}_{2}=0, \end{eqnarray} \begin{eqnarray}\nonumber
			\frac{dU}{ds}=(-c_{2} \overline{k}_{1}-c \overline{k}_{2})\xi_{1}=0,\end{eqnarray} is found. Moreover, tangents of $\int \xi_{2} ds$ and $\int \xi_{3}ds$ curves makes the fixed angle with $U$ axis. Then, $\int \xi_{2} ds$ and $\int \xi_{3}ds$ curves are helix curves with $U$ axis.\\
			\item[(iv)] For \begin{eqnarray}\nonumber
			-\frac{\overline{k}_{2}}{\overline{k}_{1}}=\tan s, \end{eqnarray} \begin{eqnarray}\nonumber \beta(s)=c(\tan s\xi_{2}+\xi_{3})=\frac{c}{\cos s}(\sin s\xi_{2}+\cos s\xi_{3})=c\sec s Y(s) \end{eqnarray} and $Y(s)\in S^{2}$.\\
		\end{itemize}
		\end{proof}

\begin{proposition}
	If $a\overline{k}_{1}+b\overline{k}_{2}+1=0, \quad  a,b \in \mathbb{R}$, $\int \xi_{1} ds$ is a spherical curve. Really, if $\int \xi_{1} ds$ is a spherical curve, $\int \xi_{1} ds$ is written as $\int \xi_{1} ds=a\xi_{2}+b\xi_{3}$. If derivative of both sides of equality is taken, $\xi_{1}=(-a\overline{k}_{1}-b\overline{k}_{2})\xi_{1}$ is obtained. Thus, $a\overline{k}_{1}+b\overline{k}_{2}+1=0, \quad  a,b \in \mathbb{R}$ is found. Also, $\Arrowvert \int \xi_{1} ds \lVert^{2}=r^{2}=a^{2}+b^{2}$. $r$ is radius of a sphere. 
\end{proposition}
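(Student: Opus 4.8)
The plan is to prove the stated equivalence in two steps, reading the displayed computation in the statement as the ``$\alpha=\int\xi_1\,ds$ spherical $\Rightarrow$ linear relation $a\overline{k}_1+b\overline{k}_2+1=0$'' direction, and supplying the converse, which is the part that actually exhibits a sphere. Throughout I would use only the $n=3$ frame equations $\xi_1'=\overline{k}_1\xi_2+\overline{k}_2\xi_3$, $\xi_2'=-\overline{k}_1\xi_1$, $\xi_3'=-\overline{k}_2\xi_1$ and the fact that $\{\xi_1,\xi_2,\xi_3\}$ is orthonormal.

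For the converse (``$a\overline{k}_1+b\overline{k}_2+1=0$ with $a,b\in\mathbb{R}$ $\Rightarrow$ spherical''): I would set up the candidate centre $m(s)=\alpha(s)-a\xi_2(s)-b\xi_3(s)$ and differentiate, obtaining $m'=\xi_1+a\overline{k}_1\xi_1+b\overline{k}_2\xi_1=(1+a\overline{k}_1+b\overline{k}_2)\xi_1=0$, so $m$ is a fixed point. Then $\|\alpha-m\|^2=\|a\xi_2+b\xi_3\|^2=a^2+b^2$ since $\{\xi_2,\xi_3\}$ is orthonormal, so $\alpha$ lies on the sphere centred at $m$ with radius $r=\sqrt{a^2+b^2}$.

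For the forward direction: assume $\alpha$ lies on a sphere; after a translation I may take its centre at the origin, so $\langle\alpha,\alpha\rangle=r^2$. Differentiating gives $\langle\xi_1,\alpha\rangle=0$, hence $\alpha\in\mathrm{span}\{\xi_2,\xi_3\}$ and I may write $\alpha=a(s)\xi_2+b(s)\xi_3$. Differentiating again and substituting the frame equations yields $\xi_1=a'\xi_2+b'\xi_3-(a\overline{k}_1+b\overline{k}_2)\xi_1$; comparing the $\xi_2$- and $\xi_3$-components forces $a'=b'=0$, so $a,b$ are constants, while the $\xi_1$-component gives $a\overline{k}_1+b\overline{k}_2+1=0$. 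This simultaneously recovers $a^2+b^2=\|\alpha\|^2=r^2$.

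The calculations are short, so I do not expect a genuine obstacle; the only points needing care are (i) invoking orthonormality of the normal vectors so that $\|a\xi_2+b\xi_3\|^2=a^2+b^2$, (ii) the step that upgrades the coefficient functions to constants — which is exactly where being a \emph{rotation minimizing} frame is used, since the $\xi_2$- and $\xi_3$-rows of the Cartan matrix have no $\xi_2,\xi_3$ entries to spoil the comparison — and (iii) making explicit the harmless normalization placing the sphere's centre at the origin, which the statement does implicitly in writing $\int\xi_1\,ds=a\xi_2+b\xi_3$.
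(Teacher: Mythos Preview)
Your proposal is correct and follows the same line as the paper: write $\alpha=\int\xi_1\,ds$ as $a\xi_2+b\xi_3$ and differentiate using the RMF equations $\xi_2'=-\overline{k}_1\xi_1$, $\xi_3'=-\overline{k}_2\xi_1$. In fact you are more thorough than the paper, which only sketches the ``spherical $\Rightarrow$ linear relation'' direction inside the statement and tacitly assumes $a,b$ constant; you supply the missing ``linear relation $\Rightarrow$ spherical'' direction via the fixed-centre argument $m=\alpha-a\xi_2-b\xi_3$, and you make explicit that $a'=b'=0$ follows precisely because the RMF Cartan matrix has no off-diagonal terms among the normals.
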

We can write similar results in for type 2 rectifying-type curves.
\begin{theorem}
	Let $\{\xi_{1},\xi_{2},\xi_{3}\}$ be a Rotation minimizing frame (i.e. Frenet-type frame) on $\alpha=\int \xi_{1}ds$ and 
	\begin{eqnarray}\nonumber
	\gamma=c\xi_{2}+\mu(s) \xi_{3}, \quad c \in \mathbb{R},
	\end{eqnarray} be a type 2 rectifying-type curve. If $\gamma'$ is a rectifying-type curve, the following items holds:
	\begin{itemize}
		\item[(i)] \begin{eqnarray}\nonumber
		\gamma=c\xi_{2}-(c \frac{\overline{k}_{1}}{\overline{k}_{2}})\xi_{3},\quad c \in \mathbb{R}, \end{eqnarray} is a rectifying curve.\\
		\item[(ii)] If \begin{eqnarray}\nonumber
		-(c \frac{\overline{k}_{1}}{\overline{k}_{2}})=s+b, \quad c, b \in \mathbb{R}, \end{eqnarray} $\gamma(s)=\int \xi_{3} ds $ is a rectifying curve.\\
		\item[(iii)] If \begin{eqnarray}\nonumber
		-(c \frac{\overline{k}_{1}}{\overline{k}_{2}})=constant=c_{3}, \quad c_{3} \in \mathbb{R}, \end{eqnarray} $\gamma(s)=U=c\xi_{2}+c_{3}\xi_{3} $ is a constant vector. Moreover, $\int \xi_{2} ds$ and $\int \xi_{3}ds$ curves are helix curves with $U$ axis.\\
		\item[(iv)] If \begin{eqnarray}\nonumber
		- \frac{\overline{k}_{1}}{\overline{k}_{2}}=\tan s, \end{eqnarray} \begin{eqnarray}\nonumber \gamma(s)=c\sec s Y(s), \end{eqnarray} and $Y(s)\in S^{2}$. Hence, a type 2 rectifying-type curve $\gamma$ is obtained from a spherical curve (i.e. from $Y(s)$ spherical curve). This situation is consistent with conditions in Chen's papers.\\
	\end{itemize}
\end{theorem}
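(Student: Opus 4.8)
The statement for type 2 rectifying-type curves is the mirror image of \Cref{thm}, so the plan is to follow the same four-step scheme, exchanging the roles of $\xi_2$ and $\xi_3$ and of the curvatures $\overline{k}_1$ and $\overline{k}_2$. First I would differentiate $\gamma = c\xi_2 + \mu(s)\xi_3$ using the Rotation minimizing frame equations (5.1): since $\xi_2' = -\overline{k}_1\xi_1$ and $\xi_3' = -\overline{k}_2\xi_1$, one gets
\begin{eqnarray}\nonumber
\gamma' = \mu'\xi_3 + (-c\overline{k}_1 - \mu\overline{k}_2)\xi_1.
\end{eqnarray}
For $\gamma'$ to again be a rectifying-type curve (i.e.\ to have no $\xi_1$-component, so that it lies in the plane spanned by the "tangent-type" and "binormal-type" directions of the frame), the coefficient of $\xi_1$ must vanish, forcing $-c\overline{k}_1 - \mu\overline{k}_2 = 0$, hence $\mu = -c\,\overline{k}_1/\overline{k}_2$. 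Substituting this back yields item (i); and since then $\gamma' = \mu'\xi_3$, one identifies $T_\gamma = \xi_3$ and $N_\gamma = \xi_1$, so $\langle \gamma, N_\gamma\rangle = \langle c\xi_2 + \mu\xi_3, \xi_1\rangle = 0$, which is precisely Chen's defining property of a rectifying curve.

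For item (ii), imposing $\mu = -(c\,\overline{k}_1/\overline{k}_2) = s + b$ turns $\gamma$ into $c\xi_2 + (s+b)\xi_3$; differentiating gives $\gamma' = \xi_3$ (the $\xi_1$-term already being zero by item (i)), so up to a constant of integration $\gamma$ coincides with $\int \xi_3\, ds$, and it is rectifying by item (i). For item (iii), taking $\mu = -(c\,\overline{k}_1/\overline{k}_2) = c_3$ constant gives $\gamma = U = c\xi_2 + c_3\xi_3$ with $\mu' = 0$, so $\gamma' = 0$ and $U$ is a fixed vector; moreover $\langle \xi_3, U\rangle = c_3$ and $\langle \xi_2, U\rangle = c$ are constant along the curve, so the tangents $\xi_2$ and $\xi_3$ of the integral curves $\int\xi_2\,ds$, $\int\xi_3\,ds$ make constant angle with the fixed direction $U$, which is the defining property of a (generalized) helix. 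For item (iv), the choice $-\overline{k}_1/\overline{k}_2 = \tan s$ gives
\begin{eqnarray}\nonumber
\gamma(s) = c(\xi_2 + \tan s\,\xi_3) = \frac{c}{\cos s}(\cos s\,\xi_2 + \sin s\,\xi_3) = c\sec s\, Y(s),
\end{eqnarray}
and since $\{\xi_2,\xi_3\}$ are orthonormal, $\|Y(s)\|^2 = \cos^2 s + \sin^2 s = 1$, so $Y(s)\in S^2$; this exhibits the type 2 rectifying-type curve as a modified spherical curve exactly as in Chen's characterization.

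I expect no serious obstacle: every step is a direct frame computation analogous to the already-proven \Cref{thm}, the only point requiring a little care being the bookkeeping of which curvature multiplies which frame vector when $\xi_2$ and $\xi_3$ swap roles relative to the type 1 case. The one genuinely substantive move — common to both theorems — is the passage in item (i) from "$\gamma'$ is rectifying-type" (an algebraic condition on the $\xi_1$-coefficient) to "$\gamma$ is a rectifying curve in Chen's sense" (the geometric condition $\langle\gamma,N_\gamma\rangle=0$); this hinges on correctly identifying the Frenet apparatus of $\gamma$ itself from the reduced expression $\gamma' = \mu'\xi_3$, which shows $T_\gamma$ is parallel to $\xi_3$ and hence $N_\gamma$ parallel to $\xi_1$.
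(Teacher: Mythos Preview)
Your proposal is correct and follows exactly the approach the paper intends: the paper's own proof simply reads ``The proof can be given like the proof of Theorem~\ref{thm},'' and your four steps are precisely that analogue with $\xi_2\leftrightarrow\xi_3$ and $\overline{k}_1\leftrightarrow\overline{k}_2$ swapped. Your identification of $T_\gamma=\xi_3$, $N_\gamma=\xi_1$ from $\gamma'=\mu'\xi_3$ mirrors the paper's argument for item~(i) of Theorem~\ref{thm}, and the remaining items are handled identically.
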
 
\begin{proof}
	The proof can be given like the proof of Theorem (\ref{thm}).
\end{proof}
\begin{example}
	Let $\alpha(s)$ be any curve and  $\{T,N,B\}$ be Frenet frame of this curve. This Frenet frame is a Rotation minimizing frame on $\int N(s) ds$ and this frame formulas as follows:
	\begin{eqnarray}\nonumber
	\left[ 
	\begin{array}{c}
	N^{\prime }(s) \\ 
	B^{\prime }(s) \\ 
	T^{\prime }(s)	
	\end{array}
	\right] =\left[ 
	\begin{array}{ccc}
	0 & \tau & -\kappa \\
	-\tau & 0 & 0 \\ 
	\kappa& 0 & 0 
	\end{array}
	\right] \left[ 
	\begin{array}{c}
	N(s) \\ 
	B(s) \\ 
	T(s)
	\end{array}
	\right]. 
	\end{eqnarray} Here, $\overline{k}_{1}=\tau$, $\overline{k}_{2}=-\kappa$,  $\xi_{1}=N$, $\xi_{2}=B$ and $\xi_{3}=T$. Therefore, \begin{eqnarray}\nonumber
	\gamma=cB+c\frac{\tau}{\kappa}T, \quad c \in \mathbb{R},
	\end{eqnarray} is a type 2 rectifying-type curve.
	Then, for  type 2 rectifying-type curve $\gamma$, the following items holds:
	\begin{itemize}
		\item[(i)] $\gamma$ is a rectifying curve \cite{yay} (Modified Darboux).\\
		\item[(ii)] If \begin{eqnarray}\nonumber
		c \frac{\tau}{\kappa}=s+b, \quad c, b \in \mathbb{R}, \end{eqnarray} $\gamma(s)=(s+b)T+cB=\int T ds =\alpha(s)$ is a rectifying curve \cite{chen1,chen2,chen3,chen4}.\\
		\item[(iii)] If \ \begin{eqnarray}\nonumber
		-\frac{\tau}{\kappa}=constant, \end{eqnarray} $\gamma(s)=c_{3}T+cB, \quad c_{3}, \ c \in \mathbb{R} $\ \ is a constant vector. \ \ In addition,\ \ \ $U=cos \theta T+sin \theta B$. $\int T ds$ and $\int B ds$ curves are helix curves with $U$ axis.\\
		\item[(iv)] If \begin{eqnarray}\nonumber
		\frac{\tau}{\kappa}=\tan s, \end{eqnarray} \begin{eqnarray}\nonumber \gamma(s)=cB+c\tan s T=\frac{c}{\cos s}(\cos s B+\sin sT)=c\sec s Y(s), \end{eqnarray} and $Y(s)\in S^{2}$. Hence, a type 2 rectifying-type curve $\gamma$ is obtained from a spherical curve (i.e. from $Y(s)$ spherical curve). This situation is consistent with conditions in Chen's papers.\\
		\item[(v)] If $a\overline{k}_{1}+b\overline{k}_{2}+1=0, \quad  a,b \in \mathbb{R}$, $\int N ds$ is a spherical curve. Here, since  $\overline{k}_{1}=\tau$ and $\overline{k}_{2}=-\kappa$, $\alpha$ is a Bertrand curve.
	\end{itemize}
\end{example}
\section{Rectifying-Type Curves and Rotation Minimizing Frame in $\mathbb{R}^{4}$}
\par For $n=4$, the formulae of the Rotation minimizing frame $\{{\xi}_{1},{\xi}_{2},{\xi}_{3},{\xi}_{4}\}$ on $\int {\xi}_{1} ds$ is given in Equation (\ref{rmf1}).\\
\begin{definition}
	Let \ $\varphi(s)=f(s) \xi_{2}(s)+g(s) \xi_{3}(s)+h(s) \xi_{4}(s)$ \ be a rectifying-type curve. Then, 
		\begin{itemize}
		\item[(i)]If \ $g(s)=constant=a_{1}$ and $h(s)=constant=a_{2}$ are taken, \begin{eqnarray}\nonumber
		\varphi_{1}=f(s)\xi_{2}(s)+a_{1}\xi_{3}(s)+a_{2}\xi_{4}(s), \quad a_{1}, a_{2} \in \mathbb{R},
		\end{eqnarray} is defined as a type 1 rectifying-type curve.\\
		\item[(ii)] If  $f(s)=constant=b_{1}$ and $h(s)=constant=b_{2}$ is taken,\begin{eqnarray}\nonumber
		\varphi_{2}=b_{1}\xi_{2}(s)+g(s)\xi_{3}(s)+b_{2}\xi_{4}(s), \quad b_{1}, b_{2} \in \mathbb{R},
		\end{eqnarray} is defined as a type 2 rectifying-type curve.\\
		\item[(iii)]  If  $f(s)=constant=c_{1}$ and $g(s)=constant=c_{2}$ is taken,\begin{eqnarray}\nonumber
		\varphi_{3}=c_{1}\xi_{2}(s)+c_{2}\xi_{3}(s)+h(s)\xi_{4}(s), \quad c_{1}, c_{2} \in \mathbb{R},
		\end{eqnarray} is defined as a type 3 rectifying-type curve.\\
		
	\end{itemize}
\end{definition}
\begin{theorem} \label{thmm}
	Let $\{\xi_{1},\xi_{2},\xi_{3},\xi_{4}\}$ be a Rotation minimizing frame (i.e. Frenet-type frame) on $\alpha=\int \xi_{1}ds$ and 
	\begin{eqnarray}\nonumber
	\varphi_{1}=f(s)\xi_{2}(s)+a_{1}\xi_{3}(s)+a_{2}\xi_{4}(s), \quad a_{1}, a_{2} \in \mathbb{R},
	\end{eqnarray}  be a type 1 rectifying-type curve. If $\varphi_{1}'$ is a rectifying-type curve, the following items holds:
	\begin{itemize}
		\item[(i)] \begin{eqnarray}\nonumber
		\varphi_{1}=-(\frac{a_{1}\overline{k}_{2}+a_{2}\overline{k}_{3}}{\overline{k}_{1}})\xi_{2}+a_{1}\xi_{3}+a_{2}\xi_{4},\quad a_{1}, a_{2} \in \mathbb{R}, \end{eqnarray} is a rectifying curve.\\
		\item[(ii)] If \begin{eqnarray}\nonumber
		-(\frac{a_{1}\overline{k}_{2}+a_{2}\overline{k}_{3}}{\overline{k}_{1}})=s+b, \quad a_{1}, a_{2}\quad and\quad b \in \mathbb{R}, \end{eqnarray} $\varphi_{1}=\int \xi_{2} ds $ is a rectifying curve.\\
		\item[(iii)] If \begin{eqnarray}\nonumber
		-(\frac{a_{1}\overline{k}_{2}+a_{2}\overline{k}_{3}}{\overline{k}_{1}})=constant=c_{3}, \quad c_{3} \in \mathbb{R}, \end{eqnarray} $\varphi_{1}(s)=U=c_{3}\xi_{2}+a_{1}\xi_{3}+ a_{2}\xi_{4}$ is a constant vector. Moreover, $\int \xi_{2} ds$, $\int \xi_{3}ds$ and $\int \xi_{4}ds$ curves are helix curves with $U$ axis.\\
		\item[(iv)] If \begin{eqnarray}\nonumber
			-(\frac{a_{1}\overline{k}_{2}+a_{2}\overline{k}_{3}}{\overline{k}_{1}})=\tan s, \end{eqnarray} and $a_{1}^{2}+a_{2}^{2}=1$, \begin{eqnarray}\nonumber \varphi_{1}(s)=\sec s Y(s), \end{eqnarray} and $Y(s)\in S^{3}$. Hence, a type 1 rectifying-type curve $\varphi_{1}$ is obtained from a spherical curve (i.e. from $Y(s)$ spherical curve). This situation is consistent with conditions in Chen's papers.\\
	\end{itemize}
\end{theorem}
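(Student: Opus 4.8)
The strategy mirrors the proof of Theorem \ref{thm} verbatim, only now in $\mathbb{R}^4$ with the RMF equations \eqref{rmf1}. First I would differentiate $\varphi_1 = f\,\xi_2 + a_1\,\xi_3 + a_2\,\xi_4$ using \eqref{rmf1}: since $a_1,a_2$ are constants and $\xi_2',\xi_3',\xi_4'$ each equal a multiple of $\xi_1$ (namely $-\overline{k}_1\xi_1$, $-\overline{k}_2\xi_1$, $-\overline{k}_3\xi_1$ respectively), I get
\begin{eqnarray}\nonumber
\varphi_1' = f'\,\xi_2 + \bigl(-f\overline{k}_1 - a_1\overline{k}_2 - a_2\overline{k}_3\bigr)\xi_1 .
\end{eqnarray}
Thus $\varphi_1'$ lies in $\mathrm{span}\{\xi_1,\xi_2\}$ already; for it to be a rectifying-type curve in the $\{\xi_1,\xi_2,\xi_3,\xi_4\}$ frame — i.e. to have no $\xi_1$-component (the tangent direction of $\alpha$) — one needs the coefficient of $\xi_1$ to vanish, which forces $f = -(a_1\overline{k}_2 + a_2\overline{k}_3)/\overline{k}_1$. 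Substituting this back gives item (i)'s formula for $\varphi_1$, and then $T_{\varphi_1} = \xi_2$, while the normal direction is $\xi_1$; hence $\langle \varphi_1,\xi_1\rangle = 0$ (all the remaining terms $a_1\langle\xi_3,\xi_1\rangle$, $a_2\langle\xi_4,\xi_1\rangle$ vanish by orthonormality), so $\varphi_1$ is a rectifying curve in the classical sense.

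For (ii), I would observe that once $f = -(a_1\overline{k}_2+a_2\overline{k}_3)/\overline{k}_1 = s+b$ is the arclength-affine choice, the curve $\varphi_1 = (s+b)\xi_2 + a_1\xi_3 + a_2\xi_4$ has derivative $\varphi_1' = \xi_2$, so $\varphi_1$ is (a reparametrization/translate of) $\int \xi_2\,ds$; being of the form (position) $= s\cdot(\text{tangent}) + (\text{constant normal part})$ it is a rectifying curve by Chen's linear-ratio criterion reproduced in the introduction. For (iii), setting $f$ equal to a constant $c_3$ makes $\varphi_1' = (-c_3\overline{k}_1 - a_1\overline{k}_2 - a_2\overline{k}_3)\xi_1$; but the defining relation $c_3\overline{k}_1 + a_1\overline{k}_2 + a_2\overline{k}_3 = 0$ (which is exactly the $f=c_3$ specialization of the vanishing-$\xi_1$-coefficient condition) kills this, so $\varphi_1' \equiv 0$ and $U := \varphi_1$ is a constant vector. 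Then each of $\int\xi_2\,ds$, $\int\xi_3\,ds$, $\int\xi_4\,ds$ has unit tangent $\xi_2,\xi_3,\xi_4$ making a constant inner product with the fixed $U$ (because $\langle U,\xi_i\rangle$ has zero derivative, again by \eqref{rmf1}), so these are helices with axis $U$.

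For (iv), with $-(a_1\overline{k}_2+a_2\overline{k}_3)/\overline{k}_1 = \tan s$ one writes
\begin{eqnarray}\nonumber
\varphi_1(s) = \tan s\,\xi_2 + a_1\xi_3 + a_2\xi_4 = \frac{1}{\cos s}\bigl(\sin s\,\xi_2 + a_1\cos s\,\xi_3 + a_2\cos s\,\xi_4\bigr) = \sec s\, Y(s),
\end{eqnarray}
and I would check $\|Y(s)\|^2 = \sin^2 s + (a_1^2+a_2^2)\cos^2 s = \sin^2 s + \cos^2 s = 1$ using the hypothesis $a_1^2 + a_2^2 = 1$ and orthonormality of $\{\xi_2,\xi_3,\xi_4\}$, so $Y(s)\in S^3$, recovering the spherical-curve picture of Chen. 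The only genuinely new bookkeeping relative to the three-dimensional case is that the single curvature ratio $\overline{k}_2/\overline{k}_1$ is replaced throughout by the combination $(a_1\overline{k}_2 + a_2\overline{k}_3)/\overline{k}_1$; there is no real obstacle, and the main point to be careful about is simply that "rectifying-type" in the ambient $\mathbb{R}^4$ frame means "the $\xi_1$-component vanishes," so that in items (i)–(iii) the resulting $\varphi_1$ genuinely lies in the rectifying plane $\mathrm{span}\{\xi_2,\xi_3,\xi_4\}$ and one must invoke orthonormality to discard the $\xi_3,\xi_4$ contributions to $\langle\varphi_1,\xi_1\rangle$.
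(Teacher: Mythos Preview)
Your proposal is correct and follows essentially the same route as the paper: compute $\varphi_1'$ via the RMF equations \eqref{rmf1}, kill the $\xi_1$-coefficient to determine $f$, identify $T_{\varphi_1}=\xi_2$ and $N_{\varphi_1}=\xi_1$, and then specialize in (ii)--(iv). Your treatment of (iv) is in fact slightly more careful than the paper's, since you retain the coefficients $a_1,a_2$ in $Y(s)=\sin s\,\xi_2 + a_1\cos s\,\xi_3 + a_2\cos s\,\xi_4$ and explicitly verify $\|Y(s)\|^2=\sin^2 s+(a_1^2+a_2^2)\cos^2 s=1$ using the hypothesis $a_1^2+a_2^2=1$.
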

\begin{proof}
	\begin{itemize}
		\item[(i)] Since \begin{eqnarray}\nonumber
		\varphi_{1}'=f'(s)\xi_{2}(s)+(-f(s)\overline{k}_{1}- a_{1}\overline{k}_{2}-a_{2}\overline{k}_{3})\xi_{1},
		\end{eqnarray} $\varphi_{1}'$ is a rectifying-type curve if and only if \begin{eqnarray}\nonumber
		-f(s)\overline{k}_{1}- a_{1}\overline{k}_{2}-a_{2}\overline{k}_{3}=0, \end{eqnarray} and \begin{eqnarray}\nonumber
		f=-(\frac{a_{1}\overline{k}_{2}+a_{2}\overline{k}_{3}}{\overline{k}_{1}}). \end{eqnarray} \\
		Thus, for
		\begin{eqnarray}\nonumber
		\varphi_{1}=-(\frac{a_{1}\overline{k}_{2}+a_{2}\overline{k}_{3}}{\overline{k}_{1}})\xi_{2}+a_{1}\xi_{3}+a_{2}\xi_{4},\quad a_{1},a_{2} \in \mathbb{R}, \end{eqnarray} curve,  $T_{\varphi_{1}}=\xi_{2}$ and $N_{\varphi_{1}}=\xi_{1}$. Resultly, $\langle{\varphi_{1},\xi_{1}}\rangle=0$ or $\langle{\varphi_{1},N_{\varphi_{1}}}\rangle=0$ is obtained. $\varphi_{1}$ is a rectifying curve. 
		
		\item[(ii)] For \begin{eqnarray}\nonumber
		-(\frac{a_{1}\overline{k}_{2}+a_{2}\overline{k}_{3}}{\overline{k}_{1}})=s+b, \quad a_{1},a_{2}, b \in \mathbb{R}, \end{eqnarray} 
		\begin{eqnarray}\nonumber
		\varphi_{1}=(s+b)\xi_{2}+a_{1}\xi_{3}+a_{2}\xi_{4},\quad a_{1}, a_{1}, b \in \mathbb{R}, \end{eqnarray} rectifying curve overlap with $\int \xi_{2} ds $ curve.\\
		\item[(iii)] For \begin{eqnarray}\nonumber
		-(\frac{a_{1}\overline{k}_{2}+a_{2}\overline{k}_{3}}{\overline{k}_{1}})=constant=c_{3}, \quad a_{1}, a_{1}, c_{3} \in \mathbb{R}, \end{eqnarray} $U=c_{3}\xi_{2}+a_{1}\xi_{3}+a_{2}\xi_{4} $ is obtained. Since \begin{eqnarray}\nonumber
		-c_{3} \overline{k}_{1}-a_{1} \overline{k}_{2}-a_{2} \overline{k}_{3}=0, \end{eqnarray} \begin{eqnarray}\nonumber
		\frac{dU}{ds}=(-c_{3} \overline{k}_{1}-a_{1} \overline{k}_{2}-a_{2}\overline{k}_{3})\xi_{1}=0,\end{eqnarray} is found. Moreover, tangents of $\int \xi_{2} ds$, $\int \xi_{3} ds$ and $\int \xi_{4}ds$ curves makes the fixed angle with $U$ axis. Then, $\int \xi_{2} ds$, $\int \xi_{3} ds$ and $\int \xi_{4}ds$ curves are helix curves with $U$ axis.\\
		\item[(iv)] For \begin{eqnarray}\nonumber
		-(\frac{a_{1}\overline{k}_{2}+a_{2}\overline{k}_{3}}{\overline{k}_{1}})=\tan s, \end{eqnarray} and $a_{1}^{2}+a_{2}^{2}=1$, \begin{eqnarray}\nonumber \varphi_{1}(s)=\tan s\xi_{2}+a_{1}\xi_{3}+a_{2}\xi_{4}=\frac{1}{\cos s}(\sin s\xi_{2}+\cos s\xi_{3}+\cos s\xi_{4})=\sec s Y(s), \end{eqnarray} and $Y(s)\in S^{3}$.\\
	\end{itemize}
\end{proof}
\begin{proposition}
	If $a_{1}\overline{k}_{1}+a_{2}\overline{k}_{2}+a_{3}\overline{k}_{3}+1=0, \  a_{1}, a_{2} \ and  \ a_{3} \in \mathbb{R}$, $\int \xi_{1} ds$ is a spherical curve. Really, if \ $\int \xi_{1} ds$ is a spherical curve, $\int \xi_{1} ds$ is written as \begin{eqnarray*}
	\int \xi_{1} ds=a_{1}\xi_{2}+a_{2}\xi_{3}+a_{3}\xi_{4}.
	\end{eqnarray*}  If derivative of both sides of equality is taken,\begin{eqnarray*} \xi_{1}=(-a_{1}\overline{k}_{1}-a_{2}\overline{k}_{2}-a_{3}\overline{k}_{3})\xi_{1}, \end{eqnarray*}is obtained. Thus,\begin{eqnarray*} a_{1}\overline{k}_{1}+a_{2}\overline{k}_{2}+a_{3}\overline{k}_{3}+1=0, \ a_{1}, \ a_{2}, \ a_{3} \in \mathbb{R},\end{eqnarray*} $\int \xi_{1} ds$ is found. Also, \begin{eqnarray*} \Arrowvert \int \xi_{1} ds \lVert^{2}=r^{2}=a_{1}^{2}+a_{2}^{2}+a_{3}^{2}.\end{eqnarray*} $r$ is radius of a sphere. 
\end{proposition}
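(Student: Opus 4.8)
The plan is to exhibit the centre of the alleged sphere explicitly and to check that it is a fixed point, exactly as in the $\mathbb{R}^{3}$ case. Along $\alpha=\int\xi_{1}ds$ the frame $\{\xi_{1},\xi_{2},\xi_{3},\xi_{4}\}$ satisfies Equation (\ref{rmf1}), i.e. $\alpha^{\prime}=\xi_{1}$, $\xi_{1}^{\prime}=\overline{k}_{1}\xi_{2}+\overline{k}_{2}\xi_{3}+\overline{k}_{3}\xi_{4}$, and $\xi_{i}^{\prime}=-\overline{k}_{i-1}\xi_{1}$ for $i=2,3,4$. These relations are the only input needed.

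For the direct implication, suppose $a_{1}\overline{k}_{1}+a_{2}\overline{k}_{2}+a_{3}\overline{k}_{3}+1=0$ with $a_{1},a_{2},a_{3}\in\mathbb{R}$, and put
\begin{eqnarray*}
m(s)=\alpha(s)-\bigl(a_{1}\xi_{2}(s)+a_{2}\xi_{3}(s)+a_{3}\xi_{4}(s)\bigr).
\end{eqnarray*}
Differentiating and substituting the structure equations gives $m^{\prime}=\xi_{1}+(a_{1}\overline{k}_{1}+a_{2}\overline{k}_{2}+a_{3}\overline{k}_{3})\xi_{1}=(1+a_{1}\overline{k}_{1}+a_{2}\overline{k}_{2}+a_{3}\overline{k}_{3})\xi_{1}=0$, so $m\equiv m_{0}$ is a fixed point. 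Since $\{\xi_{2},\xi_{3},\xi_{4}\}$ is orthonormal,
\begin{eqnarray*}
\Arrowvert\alpha(s)-m_{0}\lVert^{2}=a_{1}^{2}+a_{2}^{2}+a_{3}^{2}
\end{eqnarray*}
for every $s$, so $\alpha$ lies on the sphere with centre $m_{0}$ and radius $r=\sqrt{a_{1}^{2}+a_{2}^{2}+a_{3}^{2}}$; normalising the antiderivative so that $m_{0}=0$ gives $\Arrowvert\int\xi_{1}ds\lVert^{2}=r^{2}=a_{1}^{2}+a_{2}^{2}+a_{3}^{2}$.

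For the converse I would take a spherical $\alpha$ with centre (after translation) at the origin, so that $\langle\alpha,\alpha\rangle$ is constant and hence $\langle\alpha,\xi_{1}\rangle=\langle\alpha,\alpha^{\prime}\rangle=0$. Thus $\alpha=a_{1}(s)\xi_{2}+a_{2}(s)\xi_{3}+a_{3}(s)\xi_{4}$ for some functions $a_{i}$; differentiating and matching the coefficient of $\xi_{1}$ against those of $\xi_{2},\xi_{3},\xi_{4}$ yields both $a_{1}^{\prime}=a_{2}^{\prime}=a_{3}^{\prime}=0$ (so the $a_{i}$ are the desired constants) and $1+a_{1}\overline{k}_{1}+a_{2}\overline{k}_{2}+a_{3}\overline{k}_{3}=0$, with $r^{2}=\Arrowvert\alpha\lVert^{2}=a_{1}^{2}+a_{2}^{2}+a_{3}^{2}$.

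The computations are routine once (\ref{rmf1}) is used; the only real point of care is the additive constant implicit in $\int\xi_{1}ds$, so that ``spherical'' is to be understood up to a translation and the centre must be identified with that constant. A minor additional subtlety in the converse is to justify that $a_{1},a_{2},a_{3}$ are genuinely constant rather than merely satisfying a pointwise linear relation among the curvatures; this is precisely what the separation of components in the orthonormal frame $\{\xi_{1},\xi_{2},\xi_{3},\xi_{4}\}$ provides.
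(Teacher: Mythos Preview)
Your argument is correct and, at its core, uses the same mechanism as the paper: differentiate using the RMF structure equations (\ref{rmf1}) and compare coefficients in the orthonormal frame. What you add beyond the paper's embedded argument is precisely the care the paper omits. For the forward implication you produce the centre $m(s)=\alpha-(a_{1}\xi_{2}+a_{2}\xi_{3}+a_{3}\xi_{4})$ and verify $m'=0$; the paper states the implication but never writes this down. For the converse you first justify that the $a_{i}$ are constants by separating the $\xi_{2},\xi_{3},\xi_{4}$ components of $\alpha'$, whereas the paper simply posits $\int\xi_{1}ds=a_{1}\xi_{2}+a_{2}\xi_{3}+a_{3}\xi_{4}$ with $a_{i}\in\mathbb{R}$ and differentiates. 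Your remark on the additive constant in $\int\xi_{1}ds$ (so that ``spherical'' is up to translation) is also a point the paper leaves implicit. In short: same idea, but your write-up actually closes the loop on both directions.
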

\begin{remark}
	For type 2 rectifying-type curves, \begin{eqnarray}\nonumber
	g=-(\frac{b_{1}\overline{k}_{1}+b_{2}\overline{k}_{3}}{\overline{k}_{2}}), \end{eqnarray} and for type 3 rectifying-type curves, \begin{eqnarray}\nonumber
	h=-(\frac{c_{1}\overline{k}_{1}+c_{2}\overline{k}_{2}}{\overline{k}_{3}}), \end{eqnarray} are obtained. The results given for the type 1 rectifying-type curves in Theorem (\ref{thmm}) can easily give in other types.
\end{remark} 
Now, we can generalize. For n, the formulae of the Rotation minimizing frame $\{{\xi}_{1},{\xi}_{2},...,{\xi}_{n}\}$ on $\int {\xi}_{1} ds$ is given in Equation (\ref{rmf2}).\\
\begin{definition}
	Let \ $\psi(s)=f_{1}(s) \xi_{2}(s)+f_{2}(s) \xi_{3}(s)+...+f_{n-1}(s) \xi_{n}(s)$ \ be a rectifying-type curve. Then, 
	\begin{itemize}
		\item[(a)]If \ $f_{2}(s)=constant=a_{1}$, ..., $f_{n-1}(s)=constant=a_{n-2}$ are taken, \begin{eqnarray}\nonumber
		\psi_{1}=f_{1}(s)\xi_{2}(s)+a_{1}\xi_{3}(s)+...+a_{n-2}\xi_{n}(s), \quad a_{1},..., a_{n-2} \in \mathbb{R},
		\end{eqnarray} is defined as a type 1 rectifying-type curve.\\
		\item[(b)] If  $f_{1}(s)=constant=b_{1}$, $f_{3}(s)=constant=b_{2}$,..., $f_{n-1}(s)=constant=b_{n-2}$ is taken,
		\begin{eqnarray}\nonumber
        \psi_{2}=b_{1}\xi_{2}(s)+f_{2}(s)\xi_{3}(s)+b_{2}\xi_{4}(s)+...+ b_{n-2}\xi_{n}(s), \quad b_{1},..., b_{n-2} \in \mathbb{R},
		\end{eqnarray} is defined as a type 2 rectifying-type curve.\\
		...\\
		\item[(c)]  If  $f_{1}(s)=constant=c_{1}$,..., $f_{n-2}(s)=constant=c_{n-3}$ is taken,\begin{eqnarray}\nonumber
		\psi_{n}=c_{1}\xi_{2}(s)+c_{2}\xi_{3}(s)+...+f_{n-1}(s)\xi_{n}(s), \quad c_{1},..., c_{n-2} \in \mathbb{R},
		\end{eqnarray} is defined as a type (n-1) rectifying-type curve.\\
		
	\end{itemize}
\end{definition}
\begin{theorem}\label{thmmm}
	Let $\{\xi_{1},\xi_{2},...,\xi_{n}\}$ be a Rotation minimizing frame (i.e. Frenet-type frame) on $\alpha=\int \xi_{1}ds$ and 
	\begin{eqnarray}\nonumber
	\psi_{1}=f_{1}(s)\xi_{2}(s)+a_{1}\xi_{3}(s)+...+a_{n-2}\xi_{n}(s), \quad a_{1},..., a_{n-2} \in \mathbb{R},
	\end{eqnarray}  be a type 1 rectifying-type curve. If $\psi_{1}'$ is a rectifying-type curve, the following items holds:
	\begin{itemize}
		\item[(i)] \begin{eqnarray}\nonumber
			\psi_{1}=-(\frac{a_{1}\overline{k}_{2}+...+a_{n-2}\overline{k}_{n-1}}{\overline{k}_{1}})\xi_{2}+a_{1}\xi_{3}+...+a_{n-2}\xi_{n},\quad a_{1},..., a_{n-2} \in \mathbb{R}, \end{eqnarray} is a rectifying curve.\\
		\item[(ii)] If \begin{eqnarray}\nonumber
		-(\frac{a_{1}\overline{k}_{2}+...+a_{n-2}\overline{k}_{n-1}}{\overline{k}_{1}})=s+b, \quad a_{1},..., a_{n-2}\quad and\quad b \in \mathbb{R}, \end{eqnarray} $\psi_{1}=\int \xi_{2} ds $ is a rectifying curve.\\
		\item[(iii)] If \begin{eqnarray}\nonumber
		-(\frac{a_{1}\overline{k}_{2}+...+a_{n-2}\overline{k}_{n-1}}{\overline{k}_{1}})=constant=c_{3}, \quad c_{3} \in \mathbb{R}, \end{eqnarray} $\psi_{1}(s)=U=c_{3}\xi_{2}+a_{1}\xi_{3}+...+ a_{n-2}\xi_{n}$ is a constant vector. Moreover, $\int \xi_{2} ds$, $\int \xi_{3}ds$,..., $\int \xi_{n}ds$ curves are helix curves with $U$ axis.\\
		\item[(iv)] If \begin{eqnarray}\nonumber
		-(\frac{a_{1}\overline{k}_{2}+...+a_{n-2}\overline{k}_{n-1}}{\overline{k}_{1}})=\tan s, \end{eqnarray} and $a_{1}^{2}+...+a_{n-2}^{2}=1$, \begin{eqnarray}\nonumber \psi_{1}(s)=\sec s Y(s), \end{eqnarray} and $Y(s)\in S^{n-1}$. Hence, a type 1 rectifying-type curve $\psi_{1}$ is obtained from a spherical curve (i.e. from $ Y(s)$ spherical curve). This situation is consistent with conditions in Chen's papers.\\
	\end{itemize}
\end{theorem}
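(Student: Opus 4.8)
The plan is to lift the proof of Theorem~\ref{thmm} from $\mathbb{R}^{4}$ to $\mathbb{R}^{n}$, the only change being that three-term sums become $(n-2)$-term sums. First I would differentiate
$$\psi_{1}=f_{1}(s)\xi_{2}+a_{1}\xi_{3}+\cdots+a_{n-2}\xi_{n}$$
and substitute the structure equations~\eqref{rmf2}, which give $\xi_{j}'=-\overline{k}_{j-1}\xi_{1}$ for $j=2,\dots,n$. Since the $a_{j}$ are constants, every curvature term collapses onto $\xi_{1}$, leaving
$$\psi_{1}'=f_{1}'\xi_{2}-\bigl(f_{1}\overline{k}_{1}+a_{1}\overline{k}_{2}+\cdots+a_{n-2}\overline{k}_{n-1}\bigr)\xi_{1}.$$
A rectifying-type curve is, by definition, one whose position vector lies in $\operatorname{span}\{\xi_{2},\dots,\xi_{n}\}$, i.e.\ has vanishing $\xi_{1}$-component; hence $\psi_{1}'$ is rectifying-type precisely when $f_{1}\overline{k}_{1}+a_{1}\overline{k}_{2}+\cdots+a_{n-2}\overline{k}_{n-1}=0$, which yields $f_{1}=-\bigl(a_{1}\overline{k}_{2}+\cdots+a_{n-2}\overline{k}_{n-1}\bigr)/\overline{k}_{1}$. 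This is the formula in item~(i).

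To finish item~(i) I would argue, exactly as in Theorems~\ref{thm} and~\ref{thmm}, that this $\psi_{1}$ is an honest rectifying curve: with the $\xi_{1}$-coefficient gone, $\psi_{1}'=f_{1}'\xi_{2}$, so the unit tangent is $T_{\psi_{1}}=\pm\xi_{2}$ and, since $\xi_{2}'=-\overline{k}_{1}\xi_{1}$, the principal normal is $N_{\psi_{1}}=\pm\xi_{1}$. Because $\psi_{1}\in\operatorname{span}\{\xi_{2},\dots,\xi_{n}\}$ and the frame is orthonormal, $\langle\psi_{1},\xi_{1}\rangle=0$, hence $\langle\psi_{1},N_{\psi_{1}}\rangle=0$, which by Chen's criterion is equivalent to $\psi_{1}$ being a rectifying curve.

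Items~(ii)--(iv) then follow by specialising the $f_{1}$ found above. For~(ii), $f_{1}=s+b$ gives $\psi_{1}'=f_{1}'\xi_{2}=\xi_{2}$, so $\psi_{1}=\int\xi_{2}\,ds$ up to an additive constant, and its linear coefficient is exactly Chen's condition, so it is a rectifying curve. For~(iii), $f_{1}\equiv c_{3}$ gives $\psi_{1}'=0$, so $U:=\psi_{1}=c_{3}\xi_{2}+a_{1}\xi_{3}+\cdots+a_{n-2}\xi_{n}$ is a fixed vector; since $\langle\xi_{j},U\rangle$ equals the (constant) $j$-th coefficient for each $j=2,\dots,n$, every tangent $\xi_{j}$ makes a constant angle with $U$, so each $\int\xi_{j}\,ds$ is a generalised helix with axis $U$. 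For~(iv), writing $f_{1}=\tan s$ and using $a_{1}^{2}+\cdots+a_{n-2}^{2}=1$, I would factor out $\sec s$ to obtain $\psi_{1}=\sec s\,Y(s)$ with $Y(s)=\sin s\,\xi_{2}+\cos s\,(a_{1}\xi_{3}+\cdots+a_{n-2}\xi_{n})$; then $\|Y(s)\|^{2}=\sin^{2}s+\cos^{2}s=1$, so $Y(s)\in S^{n-1}$ and $\psi_{1}$ is a spherical curve scaled by $\sec s$, matching Chen's setting.

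I do not anticipate a genuine obstacle; the statement is a routine generalisation of Theorem~\ref{thmm}. The points needing care are purely bookkeeping: keeping the index shift straight (the coefficient of $\xi_{j}$ in $\psi_{1}$ is $a_{j-2}$ for $j\ge 3$, while $\xi_{j}'=-\overline{k}_{j-1}\xi_{1}$); remembering that ``rectifying-type'' here means ``no $\xi_{1}$-component'', so the vanishing of that single component drives the whole argument; verifying the helix claim in~(iii) against every $\int\xi_{j}\,ds$, $j=2,\dots,n$, and normalising $U$ before speaking of the angle it makes; and, in~(iv), observing that the hypothesis $\sum a_{i}^{2}=1$ is exactly what places $Y(s)$ on the unit sphere $S^{n-1}$.
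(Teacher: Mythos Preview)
Your proposal is correct and follows exactly the approach indicated by the paper, whose entire proof reads ``The proof can be given like the proof of Theorem~(\ref{thmm}).'' You have simply written out the straightforward $n$-dimensional analogue of that argument (and in item~(iv) you have in fact been more careful than the paper's $n=4$ proof, correctly retaining the coefficients $a_{i}$ inside $Y(s)$ so that the hypothesis $a_{1}^{2}+\cdots+a_{n-2}^{2}=1$ is what forces $Y(s)\in S^{n-1}$).
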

\begin{proof}
	The proof can be given like the proof of Theorem (\ref{thmm}).
\end{proof}
\begin{proposition}
	If $a_{1}\overline{k}_{1}+...+a_{n-1}\overline{k}_{n-1}+1=0, \  a_{1},..., a_{n-1} \in \mathbb{R}$, $\int \xi_{1} ds$ is a spherical curve. Really, if \ $\int \xi_{1} ds$ is a spherical curve, $\int \xi_{1} ds$ is written as \begin{eqnarray*}
		\int \xi_{1} ds=a_{1}\xi_{2}+a_{2}\xi_{3}+...+a_{n-1}\xi_{n}.
	\end{eqnarray*}  If derivative of both sides of equality is taken,\begin{eqnarray*} \xi_{1}=(-a_{1}\overline{k}_{1}-a_{2}\overline{k}_{2}-...-a_{n-1}\overline{k}_{n-1})\xi_{1}, \end{eqnarray*}is obtained. Thus,\begin{eqnarray*} a_{1}\overline{k}_{1}+...+a_{n-1}\overline{k}_{n-1}+1=0, \  a_{1},..., a_{n-1} \in \mathbb{R}, \end{eqnarray*} $\int \xi_{1} ds$ is found. Also, \begin{eqnarray*} \Arrowvert \int \xi_{1} ds \lVert^{2}=r^{2}=a_{1}^{2}+...+a_{n-1}^{2}.\end{eqnarray*} $r$ is radius of a sphere. 
\end{proposition}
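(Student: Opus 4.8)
The plan is to prove both implications directly from the generalized frame equations \eqref{rmf2}, using only the orthonormality of $\{\xi_1,\dots,\xi_n\}$ together with the structural facts $\xi_{i+1}'=-\overline{k}_i\xi_1$ for $i=1,\dots,n-1$ and $\xi_1'=\sum_{j=1}^{n-1}\overline{k}_j\xi_{j+1}$, exactly as in the $n=3$ and $n=4$ cases treated earlier.

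For the ``if'' direction, suppose $a_1\overline{k}_1+\dots+a_{n-1}\overline{k}_{n-1}+1=0$ with constants $a_i\in\mathbb{R}$. First I would introduce the auxiliary vector field $V(s)=a_1\xi_2(s)+\dots+a_{n-1}\xi_n(s)$ and compute, using $\xi_{i+1}'=-\overline{k}_i\xi_1$, that $V'=-(a_1\overline{k}_1+\dots+a_{n-1}\overline{k}_{n-1})\xi_1=\xi_1=\alpha'$. Hence $\alpha-V$ is a constant vector $c$, so that $\alpha(s)-c=V(s)$ and, by orthonormality of $\xi_2,\dots,\xi_n$, $\lVert\alpha(s)-c\rVert^2=a_1^2+\dots+a_{n-1}^2$; that is, $\alpha=\int\xi_1\,ds$ lies on the sphere of radius $r=\sqrt{a_1^2+\dots+a_{n-1}^2}$ centred at $c$ (and $c=0$ after the translation normalization, which matches the statement $\lVert\int\xi_1\,ds\rVert^2=r^2$).

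For the ``only if'' direction, assume $\int\xi_1\,ds$ is spherical; after translating we may take the centre at the origin, so $\lVert\alpha\rVert^2$ is constant. Differentiating gives $\langle\alpha,\xi_1\rangle=0$, so expanding $\alpha$ in the orthonormal frame yields $\alpha=\sum_{i=1}^{n-1}a_i(s)\,\xi_{i+1}$ with $a_i(s)=\langle\alpha,\xi_{i+1}\rangle$. Then I would differentiate each coefficient: $a_i'=\langle\xi_1,\xi_{i+1}\rangle+\langle\alpha,-\overline{k}_i\xi_1\rangle=-\overline{k}_i\langle\alpha,\xi_1\rangle=0$, so every $a_i$ is constant. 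Differentiating $\alpha=\sum a_i\xi_{i+1}$ once more gives $\xi_1=-(\sum a_i\overline{k}_i)\xi_1$, i.e. $a_1\overline{k}_1+\dots+a_{n-1}\overline{k}_{n-1}+1=0$, which is the asserted relation; the identity $r^2=a_1^2+\dots+a_{n-1}^2$ again follows from orthonormality.

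All of these computations are short and mechanical; the only real subtlety is the treatment of the centre of the sphere. The statement as phrased tacitly assumes a sphere centred at the origin (so that the position vector itself has constant length and no $\xi_1$-component), whereas an arbitrary spherical curve has only $\alpha-c$ of constant length for some fixed $c$. I would therefore begin by remarking that translating the curve by $-c$ alters neither the frame $\{\xi_i\}$ nor the curvatures $\overline{k}_i$, which legitimizes the reduction to $c=0$; with that in hand the proof collapses to the two-line derivative arguments above.
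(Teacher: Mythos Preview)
Your proposal is correct and, in fact, more careful than the paper's own argument. The paper's proof is entirely contained in the proposition statement itself: it assumes $\int\xi_1\,ds$ is spherical, writes $\int\xi_1\,ds=a_1\xi_2+\dots+a_{n-1}\xi_n$, differentiates once, and reads off the relation $a_1\overline{k}_1+\dots+a_{n-1}\overline{k}_{n-1}+1=0$. That is precisely your ``only if'' computation, so at the level of the core idea the two agree.

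Where you go beyond the paper: (1) you actually prove the stated ``if'' direction (constants $a_i$ satisfying the linear relation force the curve onto a sphere) by introducing $V=\sum a_i\xi_{i+1}$ and showing $(\alpha-V)'=0$; the paper asserts this direction but only argues the converse. (2) You justify that the coefficients $a_i=\langle\alpha,\xi_{i+1}\rangle$ are constant, via $a_i'=-\overline{k}_i\langle\alpha,\xi_1\rangle=0$; the paper simply takes $a_i\in\mathbb{R}$ without explanation. (3) You address the centre-of-sphere normalization, noting that translation preserves the RMF and the curvatures $\overline{k}_i$; the paper silently assumes the centre is at the origin. None of this changes the method, which in both cases is a one-line differentiation using $\xi_{i+1}'=-\overline{k}_i\xi_1$, but your write-up closes the gaps the paper leaves open.
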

\begin{remark}
	For type 2 rectifying-type curves, \begin{eqnarray}\nonumber
	f_{2}=-(\frac{b_{1}\overline{k}_{1}+...+b_{n-2}\overline{k}_{n-1}}{\overline{k}_{2}}), \end{eqnarray} ... \\ and for type $(n-1)$ rectifying-type curves, \begin{eqnarray}\nonumber
	f_{n-1}=-(\frac{c_{1}\overline{k}_{1}+...+c_{n-2}\overline{k}_{n-2}}{\overline{k}_{n-1}}), \end{eqnarray} are obtained. The results given for the type 1 rectifying-type curves in Theorem (\ref{thmmm}) can easily give in other types.
\end{remark} 
\begin{example}
     Let the following helix curve. \\  
     \begin{eqnarray}\nonumber
	\mu=\mu(s)=(24 \cos \frac{s}{25}, 24 \sin \frac{s}{25}, \frac{7s}{25}).
	\end{eqnarray}$\{T,N_{1},N_{2}\}$ is a Rotation minimizing frame on $\mu=\mu(s)$ helix curve (this frame is also Bishop frame).
	Find type-1 and type-2 rectifying-type curves for this curve.\\
	
		\begin{displaymath}
		\left\{ \begin{array}{l}
		\textrm{{\Large $T$=(-$\frac{24}{25}$ $\sin$ $\frac{s}{25}$, $\frac{24}{25}$ $\cos$ $\frac{s}{25}$, $\frac{7s}{25}$)}},\\ \\
		\textrm{{\Large $N$=(-$\cos$$\frac{s}{25}$, -$\sin$$\frac{s}{25}$, 0)}},\\
		\\
		\textrm{{\Large $B$=$T\wedge N$= ($\frac{7}{25}$ $\sin$ $\frac{s}{25}$, -$\frac{7}{25}$ $\cos$ $\frac{s}{25}$, $\frac{24}{25}$)}},\\
		\\
		\textrm{{\LARGE $N_{1}$=(-$\cos$$\frac{7s}{625}$ $\cos$ $\frac{s}{25}$$-$$\frac{7}{25}$$\sin$$\frac{7s}{625}$ $\sin$ $\frac{s}{25}$,
	}}\\
		\\
		\textrm{{\LARGE \ \ \ \ \ \ \ -$\cos$$\frac{7s}{625}$ $\sin$ $\frac{s}{25}$$+$$\frac{7}{25}$$\cos$$\frac{s}{625}$ $\sin$ $\frac{7s}{625}$, -$\frac{24}{25}$ $\sin$ $\frac{7s}{625}$)}},\\
		\\
		\textrm{{\LARGE $N_{2}$=(-$\sin$$\frac{7s}{625}$ $\cos$ $\frac{s}{25}$$+$$\frac{7}{25}$$\cos$$\frac{7s}{625}$ $\sin$ $\frac{s}{25}$,
			}}\\
			\\
			\textrm{{\LARGE \ \ \ \ \ \ \ -$\sin$$\frac{7s}{625}$ $\sin$ $\frac{s}{25}$$-$$\frac{7}{25}$$\cos$$\frac{7s}{625}$ $\cos$ $\frac{s}{25}$, $\frac{24}{25}$ $\cos$ $\frac{7s}{625}$)}}.\\
				\end{array} \right.
		\end{displaymath} 
		and
		\begin{displaymath}
		\left\{ \begin{array}{l}
		\textrm{{\Large $\kappa$=$\frac{24}{625}$}},\\ \\
		\textrm{{\Large $\tau$=$\frac{7}{625}$}},\\
		\\
		\textrm{{\Large $\vartheta(s)$=$\int  \tau ds$}},\\
		\\
		\textrm{{\Large $k_{1}$=$\frac{24}{625}$$\cos$$\vartheta(s)$}},\\
			\\
			\textrm{{\Large $k_{2}$=$\frac{24}{625}$$\sin$$\vartheta(s)$}}.\\
			
				\end{array} \right.
				\end{displaymath} 
	Type-1 and type-2 rectifying-type curves for this curve are as follows:			
					\begin{displaymath}
					\left\{ \begin{array}{l}
					\textrm{{\Large $\beta_{1}$=-c $\frac{k_{2}}{k_{1}}$ $N_{1}$$+$c $N_{2}$}},\\ \\
					\textrm{{\Large $\beta_{2}$=c $N_{1}$$+$c $\frac{k_{2}}{k_{1}}$ $N_{2}$}}.\\
					\end{array} \right.
					\end{displaymath} 
	Here, if $c = 1$ is taken and if put in place $k_{1}$, $k_{2}$, $N_{1}$, $N_{2}$, \\	
	\begin{displaymath}
	\left\{ \begin{array}{l}
	\textrm{{\Large $\beta_{1}$=- $\frac{k_{2}}{k_{1}}$ $N_{1}$$+$$N_{2}$}},\\ \\
	\textrm{{\Large $\beta_{2}$=$N_{1}$$+$ $\frac{k_{2}}{k_{1}}$ $N_{2}$}}.\\
	\end{array} \right.
	\end{displaymath}
		\begin{displaymath}
		\left\{ \begin{array}{l}
		\textrm{{\Large $\beta_{1}$=($\tan$$\frac{7s}{625}$ $\cos$$\frac{7s}{625}$ $\cos$ $\frac{s}{25}$$+$$\frac{7}{25}$$\tan$$\frac{7s}{625}$$\sin$$\frac{7s}{625}$ $\sin$ $\frac{s}{25}$$-$$\sin$$\frac{7s}{625}$ $\cos$ $\frac{s}{25}$$+$$\frac{7}{25}$$\cos$$\frac{7s}{625}$ $\sin$ $\frac{s}{25}$,}}\\ \\
		\textrm{{\Large \ \ \ \ \ \ \ $\tan$$\frac{7s}{625}$$\cos$$\frac{7s}{625}$ $\sin$ $\frac{s}{25}$$-$$\frac{7}{25}$$\tan$$\frac{7s}{625}$$\cos$$\frac{s}{25}$ $\sin$ $\frac{7s}{625}$$-$$\sin$$\frac{7s}{625}$ $\sin$ $\frac{s}{25}$$-$$\frac{7}{25}$$\cos$$\frac{7s}{625}$ $\cos$ $\frac{s}{25}$},}\\ \\
		\textrm{{\Large \ \ \ \ \ \ \ $\frac{24}{25}$$\tan$$\frac{7s}{625}$ $\sin$ $\frac{7s}{625}$$+$$\frac{24}{25}$ $\cos$ $\frac{7s}{625}$),}}\\ 
		\\
			\textrm{{\Large $\beta_{2}$=(-$\cos$$\frac{7s}{625}$ $\cos$ $\frac{s}{25}$$-$$\frac{7}{25}$$\sin$$\frac{7s}{625}$$\sin$ $\frac{s}{25}$$-$$\cot$$\frac{7s}{625}$$\sin$$\frac{7s}{625}$ $\cos$ $\frac{s}{25}$$+$$\frac{7}{25}$$\cot$$\frac{7s}{625}$$\cos$$\frac{7s}{625}$ $\sin$ $\frac{s}{25}$,}}\\ \\
			\textrm{{\Large \ \ \ \ \ \ \ -$\cos$$\frac{7s}{625}$ $\sin$ $\frac{s}{25}$$+$$\frac{7}{25}$$\cos$$\frac{s}{25}$ $\sin$ $\frac{7s}{625}$$-$$\cot$$\frac{7s}{625}$$\sin$$\frac{7s}{625}$ $\sin$ $\frac{s}{25}$$-$$\frac{7}{25}$$\cot$$\frac{7s}{625}$$\cos$$\frac{7s}{625}$ $\cos$ $\frac{s}{25}$},}\\ \\
			\textrm{{\Large \ \ \ \ \ \ \ -$\frac{24}{25}$ $\sin$ $\frac{7s}{625}$$+$$\frac{24}{25}$$\cot$$\frac{7s}{625}$ $\cos$ $\frac{7s}{625}$),}}\\
		
				\end{array} \right.
				\end{displaymath} 
\end{example}
	\begin{figure}[tbph]
		\centering
		\includegraphics[width=0.3\linewidth, height=0.2\textheight]{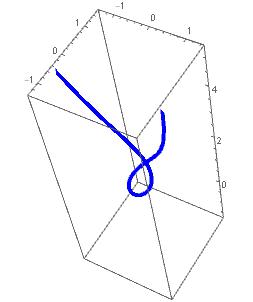}
		\caption[]{\centering $\beta_{1}$, Type-1 rectifying-type curves}
		\label{fig:fig1}
	\end{figure}
	\begin{figure}[tbph]
		\centering
		\includegraphics[width=0.4\linewidth, height=0.3\textheight]{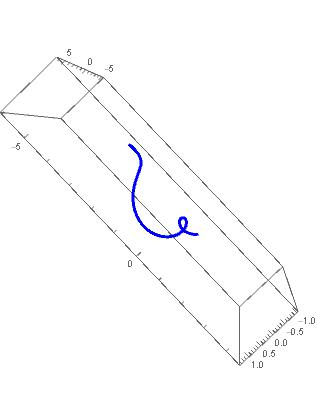}
		\caption[]{\centering $\beta_{2}$, Type-2 rectifying-type curves}
		\label{fig:fig2}
	\end{figure}
	\newpage
\section*{Conclusion}
\indent In this paper, we have given an implementation of Rotation minimizing frames (RMF) using Myller configuration. Here, we generalized easily rectifying-type curves n dimensional space $\mathbb{R}^{n}$. Also, we have seen that special curves are characterized very easily with the help of this frame and the hypothesis \lq \lq the derivative of the rectifying type curves  is of the rectifying-type curves\rq \rq\  makes an important contribution to the classification of special curves. In later studies, this study will be discussed in the Minkowski space. 
 \section*{Acknowledgement}
The first author would like to thank Tubitak-Bidep for their financial supports during her PhD studies.

\end{document}